\documentclass[11pt,a4paper,reqno]{amsart}

\usepackage{iftex}

\ifPDFTeX
  \usepackage[T1]{fontenc}
  \usepackage[utf8]{inputenc}
  \usepackage{lmodern}
\else
  \usepackage{fontspec}
  \usepackage{lmodern}
  \defaultfontfeatures{Ligatures=TeX}
\fi

\usepackage[english]{babel}
\usepackage{csquotes}
\usepackage{microtype}

\usepackage[
  a4paper,
  top=1.4in,
  bottom=1.8in,
  left=1.0in,
  right=1.0in
]{geometry}
\usepackage{setspace}
\usepackage{ragged2e}

\usepackage{amsmath,amsthm,amssymb}
\usepackage{mathtools}
\usepackage{bm}
\usepackage{mathrsfs}
\usepackage{esint}
\usepackage{centernot}
\usepackage{cancel}
\usepackage{tensor}

\usepackage{graphicx}
\usepackage{float}
\usepackage{placeins}
\usepackage[font=small,labelfont=bf]{caption}
\usepackage{subcaption}
\usepackage{wrapfig}
\usepackage{rotating}
\usepackage{adjustbox}
\usepackage{xcolor}

\usepackage{tikz}
\usetikzlibrary{
  arrows.meta,
  calc,
  decorations.pathreplacing,
  matrix,
  positioning
}
\usepackage{tikz-cd}

\usepackage{booktabs}
\usepackage{array}
\usepackage{tabularx}
\usepackage{multirow}
\usepackage{longtable}
\usepackage{makecell}
\usepackage{threeparttable}

\usepackage{enumitem}
\usepackage{quoting}
\usepackage{comment}
\usepackage{verbatim}
\usepackage{fancyvrb}
\usepackage{listings}
\usepackage{etoolbox}
\usepackage{xparse}
\usepackage{calc}

\usepackage[
  backend=biber,
  style=numeric,
  sorting=nyt,
  giveninits=true,
  maxbibnames=99,
  url=false,
  doi=false,
  isbn=false,
  backref=false
]{biblatex}
\DeclareBibliographyCategory{bibliographyentriesused}
\newtoggle{bibliographycheckactive}
\newtoggle{bibliographycheckallincluded}

\AtEveryCitekey{%
  \addtocategory{bibliographyentriesused}{\thefield{entrykey}}%
}

\newcommand{\markbibliographyentryused}[1]{%
  \addtocategory{bibliographyentriesused}{#1}%
}
\let\originalnociteforbibliographycheck\nocite
\RenewDocumentCommand{\nocite}{m}{%
  \ifstrequal{#1}{*}
    {\global\toggletrue{bibliographycheckallincluded}}
    {\forcsvlist{\markbibliographyentryused}{#1}}%
  \originalnociteforbibliographycheck{#1}%
}

\AtEveryBibitem{%
  \iftoggle{bibliographycheckactive}{%
    \ifkeyword{bibcheck-ignore}
      {}
      {\PackageWarningNoLine{bibliography-check}{%
         Uncited bibliography entry `\thefield{entrykey}'%
       }}%
  }{}%
}

\makeatletter
\AtEndDocument{%
  \iftoggle{bibliographycheckallincluded}
    {}
    {%
      \ifcsname norefnames\endcsname\norefnames\fi
      \ifcsname setoffmsgs\endcsname\setoffmsgs\fi
      \begin{refsection}
        \originalnociteforbibliographycheck{*}%
        \begingroup
          \toggletrue{bibliographycheckactive}%
          \let\blx@warn@bibempty\relax
          \setbox0=\vbox{%
            \hsize=\textwidth
            \printbibliography[
              notcategory=bibliographyentriesused,
              heading=none
            ]%
          }%
        \endgroup
      \end{refsection}
    }%
}
\makeatother

\usepackage[
  unicode=true,
  colorlinks=true,
  linkcolor=blue,
  citecolor=blue,
  urlcolor=black
]{hyperref}
\usepackage{bookmark}
\usepackage[
  nameinlink,
  noabbrev,
  capitalise
]{cleveref}

\makeatletter
\AtBeginDocument{%
  \let\refcheckoriginalcref\cref
  \RenewDocumentCommand{\cref}{s m}{%
    \forcsvlist{\wrtusdrf}{#2}%
    \IfBooleanTF{#1}
      {\refcheckoriginalcref*{#2}}%
      {\refcheckoriginalcref{#2}}%
  }%
  \let\refcheckoriginalCref\Cref
  \RenewDocumentCommand{\Cref}{s m}{%
    \forcsvlist{\wrtusdrf}{#2}%
    \IfBooleanTF{#1}
      {\refcheckoriginalCref*{#2}}%
      {\refcheckoriginalCref{#2}}%
  }%
  \let\refcheckoriginalcrefrange\crefrange
  \RenewDocumentCommand{\crefrange}{s m m}{%
    \wrtusdrf{#2}\wrtusdrf{#3}%
    \IfBooleanTF{#1}
      {\refcheckoriginalcrefrange*{#2}{#3}}%
      {\refcheckoriginalcrefrange{#2}{#3}}%
  }%
  \let\refcheckoriginalCrefrange\Crefrange
  \RenewDocumentCommand{\Crefrange}{s m m}{%
    \wrtusdrf{#2}\wrtusdrf{#3}%
    \IfBooleanTF{#1}
      {\refcheckoriginalCrefrange*{#2}{#3}}%
      {\refcheckoriginalCrefrange{#2}{#3}}%
  }%
}
\makeatother

\theoremstyle{plain}
\newtheorem{theorem}{Theorem}[section]
\newtheorem{lemma}[theorem]{Lemma}
\newtheorem{proposition}[theorem]{Proposition}

\theoremstyle{definition}
\newtheorem{definition}[theorem]{Definition}

\theoremstyle{remark}

\def\RR{\mathbb{R}}

\def\G{\mathcal{G}}

\def\HH{\mathbb{H}}

\crefname{theorem}{theorem}{theorems}
\crefname{lemma}{lemma}{lemmas}
\crefname{proposition}{proposition}{propositions}
\crefname{corollary}{corollary}{corollaries}
\crefname{conjecture}{conjecture}{conjectures}
\crefname{definition}{definition}{definitions}
\crefname{assumption}{assumption}{assumptions}
\crefname{condition}{condition}{conditions}
\crefname{example}{example}{examples}
\crefname{problem}{problem}{problems}
\crefname{question}{question}{questions}
\crefname{remark}{remark}{remarks}
\crefname{notation}{notation}{notations}
\crefname{observation}{observation}{observations}

\numberwithin{equation}{section}
\allowdisplaybreaks[3]
\patchcmd{\abstract}{\scshape\abstractname}{\bfseries\abstractname}{}{}

\begin{document}

\title[The asymptotic Plateau problem in hyperbolic space]{The asymptotic Plateau problem  for Hypersurfaces of constant $H_{k}$ curvature in hyperbolic space}

\author[X. Mei]{Xinqun Mei}
\address[X. Mei]{Key Laboratory of Pure and Applied Mathematics, School of Mathematical Sciences, Peking University,  Beijing, 100871, P.R. China}
\email{\href{qunmath@pku.edu.cn}{qunmath@pku.edu.cn}}
	
\author[J. Yan]{Jin Yan}
\address[J. Yan]{Institute of Mathematics, Academy of Mathematics and Systems Science, Chinese Academy of Sciences, Beijing 100190, P.R. China}
\email{\href{yanjin@amss.ac.cn}{yanjin@amss.ac.cn}}

\subjclass[2020]{Primary: 53C21. Secondary:35J60, 53C40}

\keywords{The asymptotic Plateau problem, Hypersurfaces of constant curvature, Fully nonlinear elliptic equations.}   

\begin{abstract}
In this paper, we study the asymptotic Plateau problem in hyperbolic space
for hypersurfaces of constant $H_k$-curvature. We prove the existence of a
smooth complete $k$-convex hypersurface in $\mathbb{H}^{n+1}$ satisfying
\[
H_k(\kappa)=\sigma, \qquad \sigma\in(0,1),
\]
with prescribed asymptotic boundary at infinity. In particular, our result extends the range of the constant
$\sigma$ in the existence theorem of Guan and Spruck
[J. Eur. Math. Soc. 12 (2010), no. 3, 797--817] for  $H_{k}$ curvature to the full interval $(0,1)$.
\end{abstract}

\maketitle

\section{Introduction}

Let $\mathbb{H}^{n+1}$ be the $(n+1)$-dimensional hyperbolic space, with
$n\geq 3$, and let $\partial_{\infty}\mathbb{H}^{n+1}$ denote its ideal
boundary at infinity. We use the upper half-space model
\[
\mathbb{H}^{n+1}
=
\left\{
(x,x_{n+1})\in\mathbb{R}^{n+1}:x_{n+1}>0
\right\},
\]
equipped with the hyperbolic metric
\[
\bar{g}
=
\frac{1}{x_{n+1}^{2}}
\sum_{i=1}^{n+1}dx_i^2.
\]
Then $\partial_{\infty}\mathbb{H}^{n+1}$ is naturally identified with
$\mathbb{R}^{n}\times\{0\}\subset \RR^{n+1}$.

In this paper, we are concerned with the asymptotic Plateau problem for
hypersurfaces of constant $H_k$-curvature in $\mathbb{H}^{n+1}$. More
precisely, let $\Omega\subset\mathbb{R}^{n}$ be a bounded smooth domain
and let
\[
\Gamma=\partial\Omega\times\{0\}
\subset\partial_{\infty}\mathbb{H}^{n+1}.
\]
We seek a smooth complete $k$-convex hypersurface
$\Sigma\subset\mathbb{H}^{n+1}$ satisfying
\begin{equation}\label{k-cur}
H_k\bigl(\kappa[\Sigma]\bigr)=\sigma,
\qquad
2\leq k\leq n-1,
\end{equation}
with prescribed asymptotic boundary
\begin{equation}\label{bry}
\partial \Sigma=\Gamma.
\end{equation}
Here
\[
\kappa[\Sigma]
=
(\kappa_1,\ldots,\kappa_n)
\]
denotes the vector of hyperbolic principal curvatures of $\Sigma$, and
$H_k$ is the normalized $k$-th elementary symmetric polynomial,
\[
H_k(\kappa)
\coloneqq
\binom{n}{k}^{-1}\sigma_k(\kappa)
=
\binom{n}{k}^{-1}
\sum_{1\leq i_1<\cdots<i_k\leq n}
\kappa_{i_1}\cdots\kappa_{i_k}.
\]
A hypersurface $\Sigma$ is called $k$-convex if
$\kappa[\Sigma]\in\Gamma_k$, where the G{\aa}rding cone $\Gamma_k$ is
defined by
\[
\Gamma_k
\coloneqq
\left\{
\lambda\in\mathbb{R}^n:
\sigma_j(\lambda)>0
\ \text{for all }1\leq j\leq k
\right\}.
\]

We now state our main theorem.

\begin{theorem}\label{thm:asymptotic-plateau}
Let $\Omega\subset\mathbb{R}^n$ be a bounded smooth domain. Suppose
that $\partial\Omega$ has nonnegative Euclidean mean curvature and that
$\sigma\in(0,1)$. Then, for every $2\leq k\leq n-1$, there exists
\[
u\in C^\infty(\Omega)\cap C^1(\overline{\Omega}),
\qquad
u>0\quad\text{in }\Omega,
\qquad
u=0\quad\text{on }\partial\Omega,
\]
such that its vertical graph
\begin{equation*}
\Sigma
\coloneqq
\bigl\{(x,u(x)):x\in\Omega\bigr\}
\subset\mathbb{H}^{n+1}
\end{equation*}
is a smooth complete $k$-convex hypersurface satisfying
\eqref{k-cur} and \eqref{bry}.
\end{theorem}

The asymptotic Plateau problem in hyperbolic space was initiated by
Anderson \cite{Anderson1982,Anderson1983}, who used methods from
geometric measure theory to construct area-minimizing hypersurfaces
with prescribed asymptotic boundary. The corresponding problem for
constant mean curvature hypersurfaces was subsequently studied by
Tonegawa \cite{Tonegawa1996} and Guan--Spruck
\cite{GuanSpruck2000}. For constant Gauss curvature, the problem was
studied by Labourie \cite{Labourie1991} in $\mathbb{H}^{3}$ and by
Rosenberg--Spruck \cite{RosenbergSpruck1994} in higher dimensions.

For more general symmetric curvature functions, a series of works by
Guan, Spruck, Szapiel, and Xiao
\cite{GuanSpruckSzapiel2009,GuanSpruck2010,GuanSpruckXiao2014}
developed a powerful PDE approach based on representing the desired
hypersurface as a vertical graph and approximating the degenerate
boundary value problem at infinity by nondegenerate Dirichlet
problems. In particular, for a broad class of elliptic curvature
functions, Guan--Spruck--Xiao \cite{GuanSpruckXiao2014} established a
full-range existence theory for locally strictly convex hypersurfaces. The situation is more delicate if one only assumes that the
hypersurface is admissible with respect to a G{\aa}rding cone, rather
than locally strictly convex. In \cite{GuanSpruck2010}, Guan and Spruck
established an existence theorem in this setting under the condition
\[
\sigma\in(\sigma_0^{k},1),
\]
where $\sigma_0\in(0,1)$ is a universal constant satisfying
$
0.3703<\sigma_0<0.3704.
$
Xiao \cite{Xiao2013GD} subsequently enlarged the allowable range of
$\sigma$, lowering the threshold to approximately $0.14$. These works
naturally lead to the following question:

\
\textit{Can the asymptotic Plateau problem of general curvature functions be solved for the admissible hypersurfaces for every
$\sigma\in(0,1)$ ?}

\

Substantial progress on this question has been made in recent years for
the curvature function $H_k^{1/k}(\kappa)$. Lu \cite{Lu2023} solved the case
$k=n-1$. Wang \cite{Wang2025,Wang2026} subsequently established the
full-range existence results for $k=2$ and $k=n-2$, respectively.
Hong--Zhang \cite{HongZhang2024} obtained important curvature estimates
for semi-convex solutions for $2\leq k\leq n-2$. The remaining
existence cases $3\leq k\leq n-3$ are addressed in the present paper.
Consequently, together with the preceding results, the asymptotic
Plateau problem for constant $H_k$-curvature in the $k$-admissible
class is solved for the full range $1\leq k\leq n.
$

We next describe the main idea of the proof. Our argument builds on the
approximation scheme developed by Guan and Spruck
\cite{GuanSpruck2010}. We seek $\Sigma$ as the vertical graph of a
function $u$ over $\Omega$, namely,
\begin{equation}\label{graph-u}
\Sigma
=
\bigl\{
(x,x_{n+1}):x\in\Omega,\ x_{n+1}=u(x)
\bigr\}.
\end{equation}
In this formulation, the geometric problem
\eqref{k-cur}--\eqref{bry} is reduced to the Dirichlet problem for a
fully nonlinear elliptic equation of the form
\begin{equation}\label{g-equ}
G(D^2u,Du,u)=H_k\bigl(\kappa[u]\bigr)=\sigma,
\qquad
u>0
\quad\text{in }\Omega,
\end{equation}
with boundary condition
\begin{equation}\label{u-bry}
u=0
\quad\text{on }\partial\Omega.
\end{equation}
The precise definition of the operator $G$ is given in
\eqref{g-equ-1}; see also \cite[Section~2]{GuanSpruck2010}.

Following the  terminology in \cite{GuanSpruck2010}, a function
$u\in C^2(\Omega)$ is called admissible if
\[
\kappa[u]
\coloneqq
\kappa[\operatorname{graph}(u)]
\in\Gamma_k.
\]
Since equation \eqref{g-equ} degenerates at points where $u=0$, it is
natural to approximate \eqref{u-bry} by the nondegenerate boundary
condition
\begin{equation}\label{ue-bry}
u=\varepsilon>0
\quad\text{on }\partial\Omega.
\end{equation}

An important feature of the work of Guan and Spruck
\cite{GuanSpruck2010} is that, for each sufficiently small fixed
$\varepsilon>0$, the approximating Dirichlet problem
\eqref{g-equ}, \eqref{ue-bry} admits a unique smooth admissible solution
for the full range $\sigma\in(0,1)$. Thus, the restriction on $\sigma$
in their asymptotic existence theorem does not arise from the
solvability of the approximating problems. Rather, it enters in the
global curvature estimate required to pass to the limit
$\varepsilon\to0$. More precisely, the limiting procedure requires a
bound for the hyperbolic principal curvatures of the approximating
hypersurfaces that is uniform in $\varepsilon$. The
maximum-principle argument developed in
\cite[Section~6]{GuanSpruck2010} yields such an estimate under the
condition $\sigma>\sigma_0^{k}$.

This observation identifies the global curvature estimate as the
central issue in extending the $k$-admissible existence theory to the
full range $\sigma\in(0,1)$. For the constant $H_k$-curvature
equation, we establish the required estimate without imposing any
additional restriction on $\sigma$.

\begin{theorem}\label{thm:hyperbolic-global-curvature}
Let $2\leq k\leq n-1$ and $0<\sigma<1$. Let
$u\in C^4(\Omega)\cap C^{2}(\overline{\Omega})$ be an admissible solution of the Dirichlet problem
\eqref{g-equ}, \eqref{ue-bry} satisfying
\begin{eqnarray}\label{omega}
   \vartheta
\coloneqq
\frac{1}{\sqrt{1+|Du|^2}}
\geq\sigma. 
\end{eqnarray}
Let $\Sigma$ be the vertical graph given by \eqref{graph-u}. Then
\begin{equation}\label{eq:hyperbolic-global-curvature}
\max_{\Sigma}|\kappa[u]|
\leq
C\left(
1+\max_{\partial\Sigma}|\kappa[u]|
\right),
\end{equation}
where $C=C(n,k,\sigma)>0$ is independent of $\varepsilon$.
\end{theorem}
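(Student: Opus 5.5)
We plan to follow the maximum-principle scheme of \cite[Section~6]{GuanSpruck2010}, replacing their test function by one suited to the concavity structure of $H_k$ itself rather than $H_k^{1/k}$. The setup is as follows. Write $\nu^{n+1}=\vartheta$ for the Euclidean vertical component of the unit normal, so that $\nu^{n+1}\ge\sigma$ by \eqref{omega}. Let $\kappa_{\max}(x)$ be the largest hyperbolic principal curvature. Since $\kappa\in\Gamma_k$ gives $\sum\kappa_i>0$, it suffices to bound $\kappa_{\max}$ from above.

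We will consider on $\Sigma$ the quantity
\[
M_0=\max_{\Sigma}\frac{\kappa_{\max}}{\nu^{n+1}-a},
\]
with a constant $0<a<\sigma$ to be chosen depending on $n,k,\sigma$. If the maximum is on $\partial\Sigma$, we obtain \eqref{eq:hyperbolic-global-curvature} directly. Otherwise, at an interior point we rotate coordinates so that $h_{ij}$ is diagonal with $\kappa_1=\kappa_{\max}$ (perturbing if $\kappa_1$ is not simple). We then apply the maximum principle to $\log h_{11}-\log(\nu^{n+1}-a)$.

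The key ingredients are the standard hyperbolic identities
\[
\nabla_{ii}\nu^{n+1}=\kappa_i^2\nu^{n+1}\ (\text{up to sign}) - (1+\kappa_i)\cdots,
\qquad
\sum_i H_k^{ii}\nabla_{ii}\nu^{n+1}=\nu^{n+1}\sum H_k^{ii}\kappa_i^2-(1+\nu^{n+1})\sum H_k^{ii}\kappa_i+\cdots.
\]
These come from \cite{GuanSpruck2010}, writing $\nu^{n+1}=\langle\nu,\partial_{n+1}\rangle u$ in hyperbolic terms. We also need the commutator formula for $\nabla_{11}h_{ii}-\nabla_{ii}h_{11}$ in a space form of curvature $-1$.

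Differentiating the equation twice gives the third-order terms $-\sum H_k^{pq,rs}h_{pq1}h_{rs1}$. The term $\nu^{n+1}\sum H_k^{ii}\kappa_i^2$ will supply the good term, of order $\kappa_1^2 H_k^{11}$. The bad terms are the third derivatives $H_k^{ii}h_{11i}^2/\kappa_1^2$ and the zeroth-order term $(1+\nu^{n+1})\sum H_k^{ii}\kappa_i=k(1+\nu^{n+1})\sigma$.

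For the third derivatives we will split indices into those with $\kappa_i$ comparable to $\kappa_1$ and the rest. For the large indices, the first-order condition $h_{11i}/\kappa_1=\nabla_i\nu^{n+1}/(\nu^{n+1}-a)$ converts them into gradient terms in $\nu$. For the small indices, we use the concavity term $-H_k^{1i,i1}h_{11i}^2\ge$ with the inequality of the Guan--Ren--Wang type,
\[
-\frac{\sigma_k^{pp,qq}h_{pp1}h_{qq1}}{\sigma_k}+\frac{(\sum_i\sigma_k^{ii}h_{ii1})^2}{\sigma_k^2}\ \ge\ \cdots\quad\text{and}\quad \sum_{i\ne1}\frac{2\sigma_k^{11,ii}h_{11i}^2}{\kappa_1-\kappa_i}.
\]
This uses the fact that for $\kappa\in\Gamma_k$ with $\kappa_1$ large, $\sigma_k^{11}\kappa_1\ge c\sigma_k$, and when $\kappa_i\le\delta\kappa_1$ one has $\sigma_k^{ii}\ge c\sigma_{k-1}$.

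Here the constant $H_k=\sigma$ (not $H_k^{1/k}$) is essential. The equation gives $\sum H^{ii}\kappa_i=k\sigma$, and the Newton--Maclaurin bound $\sum H^{ii}\ge k\sigma^{(k-1)/k}$ holds. Comparing the good term $a\sum H^{ii}$ (coming from $-a\sum H^{ii}\kappa_i^2/(\nu-a)$ after using $\kappa_1$ large) with $k(1+\nu)\sigma$ is where the choice of $a$ enters. This replaces Guan--Spruck's requirement $\sigma>\sigma_0^k$.

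The main obstacle will be this final balancing. We must show that, at the maximum point and with $\kappa_1\ge C$, the coefficient
\[
\frac{a}{\nu^{n+1}-a}\sum H^{ii}\kappa_i^2\ \ \text{dominates}\ \ \frac{1+\nu^{n+1}}{\nu^{n+1}-a}k\sigma+\frac{\sum H^{ii}|\nabla_i\nu|^2}{(\nu-a)^2},
\]
uniformly for $\nu\in[\sigma,1]$. We will first try the crude bound $\sum H^{ii}\kappa_i^2\ge H^{11}\kappa_1^2\ge c\kappa_1$, which grows in $\kappa_1$ and so beats the constant. The gradient term is handled by $\nabla_i\nu=\,$(terms $\kappa_i\cdot$ bounded), using the positivity margin $\nu-a\ge\sigma-a>0$. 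If the gradient term spoils this, we will replace $\nu-a$ by a function $\varphi(\nu)$ with $\varphi''$ of favorable sign, in the spirit of Hong--Zhang \cite{HongZhang2024}, and absorb it.
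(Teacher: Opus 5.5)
There is a genuine gap. Your test function $\kappa_{\max}/(\nu^{n+1}-a)$ with $0<a<\sigma$ is the one used in the Guan--Spruck approach, and none of your modifications removes their threshold on $\sigma$. Your diagnosis of the difficulty is also off, in three places.
\begin{itemize}
\item For $\log h_{11}-\log(\vartheta-a)$ the critical-point relation $h_{11;i}/\kappa_1=\vartheta_i/(\vartheta-a)$ makes $-\sum_iF^{ii}h_{11;i}^2/\kappa_1^2$ cancel exactly against $+\sum_iF^{ii}\vartheta_i^2/(\vartheta-a)^2$. So the gradient term in your final balancing is not there.
\item The zeroth-order term $kF(1+\vartheta^2)/(\vartheta-a)$ is harmless, since the commutator formula supplies $+kF\kappa_1$.
\item Passing from $H_k^{1/k}$ to $H_k$ changes nothing. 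On a level set of $\sigma_k$ the two linearizations differ by a positive constant factor, and the extra Hessian term is a multiple of $(\sum_iF^{ii}h_{ii;1})^2=0$.
\end{itemize}
What actually remains, by Lemma~\ref{lem:vertical-normal-identities}(3), is the competition
\[
\frac{a}{\vartheta-a}\sum_{i}F^{ii}(1+\kappa_i^2)
\quad\text{versus}\quad
\frac{2}{\vartheta-a}\sum_{i}F^{ii}\frac{u_i^2}{u^2}(\vartheta-\kappa_i).
\]
The bad sum comes from indices with $\kappa_i<\vartheta$, e.g.\ $\kappa_i\approx0$. There $F^{ii}$ can be of order $\kappa_1^{k-1}$ (take $k-1$ eigenvalues close to $\kappa_1$ and the rest close to $0$), while $F^{11}\kappa_1^2$ is then only of order $\kappa_1$. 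So your crude bound does not dominate it. Newton--Maclaurin lower bounds on $\sum F^{ii}$ do not help either, because the bad terms are themselves proportional to $F^{ii}$.

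The only competitor is $\frac{a}{\vartheta-a}\sum_jF^{jj}$. With $a<\sigma$, the naive absorption needs something like $a(n-k+1)\ge 2\vartheta(1-\vartheta^2)$ on $[\sigma,1]$. Even after using the positive terms $\frac{2}{\kappa_1}\sum_{p\ge2}\frac{F^{pp}h_{11;p}^2}{\kappa_1-\kappa_p}$, a window $a<\kappa_p<\vartheta$ survives with a negative coefficient that again must be paid for by $\frac{a}{\vartheta-a}\sum_jF^{jj}$. This is the mechanism that forces a lower threshold on $\sigma$. Your fallback ``replace $\vartheta-a$ by some $\varphi(\vartheta)$'' is exactly where a new idea is needed, and none is supplied. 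Your Guan--Ren--Wang step is aimed at the $h_{11;i}^2$ terms, which already cancel.

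The paper's remedy is the test function $\log\kappa_{\max}-\beta\log\vartheta$ with $\beta=\frac{n+k}{n}>1$.
\begin{itemize}
\item The good zeroth-order term becomes $(\beta-1)\sum_iF^{ii}(1+\kappa_i^2)=\frac kn\sum_iF^{ii}(1+\kappa_i^2)$, whose coefficient does not depend on $\sigma$.
\item The price is that the cancellation is lost. Now $F^{11}h_{11;1}^2/\kappa_1^2=\beta^2F^{11}\vartheta_1^2/\vartheta^2$ exceeds the positive $\beta F^{11}\vartheta_1^2/\vartheta^2$, and $\vartheta_1^2\approx\frac{u_1^2}{u^2}\kappa_1^2$ is of top order.
\item This is where the new inequality of Theorem~\ref{thm-crucial-ineq} is essential. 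Since $F/\kappa_1^k$ is small, $\mathcal Q_\gamma\ge0$ with $\gamma=k/n$ absorbs the fraction $\gamma$ of $F^{11}h_{11;1}^2/\kappa_1^2$. The net coefficient left is $\beta\bigl(1-\beta(1-\gamma)\bigr)=\frac{n+k}{n}\cdot\frac{k^2}{n^2}>0$.
\item The terms $\frac{\kappa_1+\kappa_p}{\kappa_1-\kappa_p}F^{pp}h_{11;p}^2/\kappa_1^2$, rewritten through the critical-point relation, make each index contribution quadratic in $1-\kappa_i/\vartheta$. Only indices with $0\le\kappa_i<\vartheta$ then cost anything, at most $\frac{n+k}{(2n+k)(n-k+1)}\sum_jF^{jj}$ in total, which $\frac kn\sum_jF^{jj}$ absorbs.
\end{itemize}
Without a quantitative inequality of this type, absorbing a fixed fraction of $F^{11}h_{11;1}^2/\kappa_1^2$ beyond plain concavity, your scheme cannot reach all $\sigma\in(0,1)$.
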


The key new ingredient in the proof of
Theorem~\ref{thm:hyperbolic-global-curvature} is a concavity inequality
established by the second author in \cite{yan2026}. Once the global
curvature estimate \eqref{eq:hyperbolic-global-curvature} is available,
the approximation and limiting argument developed by Guan and Spruck
\cite{GuanSpruck2010} can be applied to complete the proof of
Theorem~\ref{thm:asymptotic-plateau}.

\section{Preliminaries}
In this section, we first recall several formulas for  hypersurfaces in hyperbolic space, which can be found in  \cite{GuanSpruckSzapiel2009}. For completeness of this paper, we reproduce them here.  We
 then present some properties of elementary symmetric functions, which will be used in subsequent section.

\subsection{Notation and conventions}
We use the upper half-space model
\[
\mathbb{H}^{n+1}
=
\left\{
(x,x_{n+1})\in\mathbb{R}^{n+1}:x_{n+1}>0
\right\},
\]
equipped with the hyperbolic metric
\[
\bar{g}
=
\frac{1}{x_{n+1}^{2}}
\sum_{i=1}^{n+1}dx_i^2.
\]
Let $\Sigma$ be a hypersurface in $\HH^{n+1}$ and $g$ be the induced metric on $\Sigma$ from $\HH^{n+1}$. 
The Euclidean metric on $\Sigma$ means that the induced metric from $\RR^{n+1}$. Let $X$ denote the position vector of $\Sigma$. The height function of $\Sigma$ in $\mathbb{R}^{n+1}$ is defined by 
\begin{eqnarray*}
    u=\langle X, E_{n+1}\rangle,
\end{eqnarray*}
where $E_{n+1}$ is the standard unit  $(n+1)$-th vector of $\mathbb{R}^{n+1}$ in the $x_{n+1}$-direction and $\langle~, \rangle $ is the Euclidean inner product in $\mathbb{R}^{n+1}$.  Let $\textbf{n}$ and $\nu$ be the unit normal vector field to $\Sigma$ with respect to the hyperbolic metric and Euclidean metric, respectively. Then $\textbf{n}$ and $\nu$ satisfy 
\begin{eqnarray*}
    \textbf{n}= u\nu.
\end{eqnarray*}

We use \(D\) for the Euclidean connection in
\(\mathbb R^{n+1}\), \(\overline\nabla\) for the Levi--Civita connection
of the ambient hyperbolic metric, and \(\nabla\) for the induced
connection on hypersurface $\Sigma$. Throughout this paper, Greek indices range from \(1\) to \(n+1\), while Latin indices range from \(1\) to \(n\). Let $\{e_{i}\}_{i=1}^{n}$ be a local orthonormal frame of vector fields on $(\Sigma, g)$.  The second fundamental form of $\Sigma$ in $\HH^{n+1}$ is locally given by 
\begin{eqnarray*}
    h_{ij}\coloneqq g(\overline{\nabla}_{e_{i}}e_{j}, \textbf{n}),
\end{eqnarray*}
Covariant derivatives on
the hypersurface are indicated by semicolons, that is 
\begin{eqnarray*}
    h_{ij;k}=\nabla_{e_{k}}h_{ij},\quad \quad h_{ij;kl}=\nabla_{e_{l}}\nabla_{e_{k}}h_{ij}.
\end{eqnarray*}

Let $\kappa[\Sigma]\coloneqq (\kappa_{1},\cdots,\kappa_{n})$ and $\tilde\kappa(\Sigma)=(\tilde \kappa_{1},\cdots, \tilde \kappa_{n})$ be the hyperbolic and Euclidean principal curvatures of $\Sigma$， respectively. From \cite[(2.1)]{GuanSpruckSzapiel2009}, we have
\begin{eqnarray}\label{relation}
    \kappa_{i}=u \tilde \kappa_{i}+\nu^{n+1}, \quad 1\leq i\leq n,
\end{eqnarray}
where $\nu^{n+1}=\langle \nu, E_{n+1}\rangle $.

The Codazzi and Gauss eqautions are given by 
equations are
\begin{align}
    h_{ij;k}&=h_{ik;j}, \notag\\
    R_{ijkl}
        &=h_{ik}h_{jl}-h_{il}h_{jk}
          -(g_{ik}g_{jl}-g_{il}g_{jk}).\notag 
        \end{align}
In particular, at some  point $X_{0}\in \Sigma$, choose an orthogonal frame around $X_{0}$, such that 
$\{h_{ij}\}$  is diagonal, then following commutator formula holds
\begin{equation}\label{com}
    h_{pp;qq}= h_{qq;pp}
    +(\kappa_p\kappa_q-1)(\kappa_p-\kappa_q).
\end{equation}

\subsection{Elementary symmetric functions and the curvature operator}
In this subsection, we recall the definition and some basic properties of the elementary symmetric polynomial functions.  
	\begin{definition}
For $k = 1, 2,\ldots, n,$ the $k$-th elementary symmetric function $\sigma_k$ is defined by
\begin{eqnarray*} 
\sigma_k(\lambda) = \sum _{1 \le i_1 < i_2 <\cdots<i_k\leq n}\lambda_{i_1}\lambda_{i_2}\cdots\lambda_{i_k},
 \qquad \text {for} \quad\lambda\coloneqq (\lambda_1,\ldots,\lambda_n)\in \mathbb{R}^{n}.
\end{eqnarray*}
\end{definition}
We use the convention that $\sigma_0=1$ and $\sigma_k =0$ for $k>n$. Let $H_k(\lambda)$ be the normalization of 
$\sigma_{k}(\lambda)$ given by \begin{eqnarray*}
    H_k(\lambda)\coloneqq\frac{1}{\binom{n}{k}}\sigma_k(\lambda).
\end{eqnarray*} 
Denote  $\sigma _k (\lambda \left| i \right.)$ the symmetric
	function with $\lambda_i = 0$ and $\sigma _k (\lambda \left| ij \right.)$ the symmetric function with $\lambda_i =\lambda_j = 0$.  Recall that the  G{\aa}rding cone is defined as
\begin{eqnarray*} 
\Gamma_k \coloneqq \left\{ \lambda  \in \mathbb{R}^n | \sigma _i (\lambda ) > 0,~~\forall 1 \le i \le k \right\}.
\end{eqnarray*} 
\begin{definition}
    Let $W=\{W_{ij}\}$ be an $n\times n$ symmetric matrix. For $k=1,2,\cdots, n$, we define
    \begin{eqnarray*}
        \sigma_{k}(W)\coloneqq \sigma_{k}(\lambda(W))=\sum\limits_{1\leq i_{1}<i_{2}\cdots< i_{k}\leq n}\lambda_{i_{1}}(W)\lambda_{i_{2}}(W)\cdots \lambda_{i_{k}}(W),
    \end{eqnarray*}
where $\lambda(W)\coloneqq (\lambda_1(W),\lambda_2(W),\ldots,\lambda_n(W))$ denotes the vector of eigenvalues of $W$. 
Equivalently, $\sigma_k(W)$ is the sum of all $k\times k$ principal minors of $W$.
\end{definition}
We also denote by $\sigma _m (W\left|
i \right.)$ the symmetric function with $W$ deleting the $i$-row and
$i$-column and $\sigma _m (W \left| ij \right.)$ the symmetric
function with $W$ deleting the $i,j$-rows and $i,j$-columns. 

\begin{proposition}\label{prop2.1}
Suppose that  $W=\{W_{ij}\}$ is a diagonal matrix, and $1\leq k\leq n$. 
Then
\begin{eqnarray*}
\sigma_{k}^{ij}(W)= \begin{cases}
\sigma _{k- 1} W\left| i \right.), &\text{ if } i = j, \\
0, &\text{ if } i \ne j,
\end{cases}
\end{eqnarray*}
where $\sigma_{k-1}^{ij}(W)\coloneqq \frac{{\partial \sigma _k (W)}} {{\partial W_{ij} }}$.
\end{proposition}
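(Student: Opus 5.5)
The plan is to work directly from the characterization of $\sigma_k(W)$ as the sum of all $k\times k$ principal minors, so that the whole question reduces to differentiating determinants. For an index set $I=\{i_1<\cdots<i_k\}\subset\{1,\ldots,n\}$, write $W_I$ for the principal submatrix with rows and columns in $I$. Then
\[
\sigma_k(W)=\sum_{|I|=k}\det W_I,
\]
and I will treat the entries $W_{ij}$ as independent variables when forming $\partial/\partial W_{ij}$; this is the standard convention, and for the symmetric-matrix interpretation one simply symmetrizes at the end. Only the terms with $i,j\in I$ depend on $W_{ij}$, so
\[
\sigma_k^{ij}(W)=\sum_{|I|=k,\ i,j\in I}\frac{\partial \det W_I}{\partial W_{ij}}.
\]

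\textbf{Cofactors.} By Laplace (cofactor) expansion of $\det W_I$ along the row corresponding to $i$, the derivative $\partial\det W_I/\partial W_{ij}$ is the $(i,j)$-cofactor of $W_I$. Up to sign, this is the determinant of the $(k-1)\times(k-1)$ matrix obtained from $W_I$ by deleting row $i$ and column $j$.

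\textbf{Off-diagonal case.} Now I use that $W$ is diagonal. If $i\ne j$, the matrix obtained by deleting row $i$ and column $j$ still contains row $j$. In that row, the only possible nonzero entry of $W_I$ was $W_{jj}$, which sat in column $j$, and that column has been removed. So the remaining row $j$ is identically zero, the cofactor vanishes, and $\sigma_k^{ij}(W)=0$.

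\textbf{Diagonal case.} If $i=j$, the cofactor equals $\det W_{I\setminus\{i\}}$, which for diagonal $W$ is $\prod_{l\in I\setminus\{i\}}W_{ll}$. As $I$ ranges over the $k$-subsets containing $i$, the set $J=I\setminus\{i\}$ ranges bijectively over the $(k-1)$-subsets of $\{1,\ldots,n\}\setminus\{i\}$. Summing gives
\[
\sigma_k^{ii}(W)=\sum_{|J|=k-1,\ i\notin J}\ \prod_{l\in J}W_{ll}=\sigma_{k-1}(W\,|\,i),
\]
with the convention $\sigma_0=1$ covering $k=1$.

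None of these steps is a real obstacle. The only point needing care is the differentiation convention for symmetric matrices: if $W_{ij}=W_{ji}$ is regarded as a single variable, the off-diagonal derivative doubles, but it is still $0$ at a diagonal matrix, so the stated formula is unaffected.
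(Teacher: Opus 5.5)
Your proof is correct and complete. The paper gives no proof of this proposition; it is recalled in the preliminaries as a standard fact, so there is no competing route to compare against. Your reduction to principal minors and cofactors is the standard verification, and you handle the two points that need care: the case $k=1$ via $\sigma_0=1$, and the differentiation convention for symmetric matrices, which is harmless at a diagonal point.

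One small remark: your diagonal case does not actually need $W$ to be diagonal. For every $W$,
\[
\frac{\partial\sigma_k(W)}{\partial W_{ii}}=\sum_{J\subset\{1,\ldots,n\}\setminus\{i\},\ |J|=k-1}\det W_J=\sigma_{k-1}(W\,|\,i),
\]
so diagonality is only used to make the off-diagonal cofactors vanish.
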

\begin{proposition}
For \(\lambda\in\Gamma_k\) and
\(\lambda_1\geq\cdots\geq\lambda_n\), the following property holds:
\begin{equation}\label{eq:garding-one-sided-bound}
    \lambda_i>-\frac{n-k}{k}\lambda_1,
    \qquad 1\leq i\leq n,
\end{equation}
and \begin{eqnarray}\label{1i}
    \sigma_{k-1}(\lambda |1)\leq \sigma_{k-1}(\lambda|i),\quad \forall\quad i\geq 2.
\end{eqnarray}
\end{proposition}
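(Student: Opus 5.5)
Both inequalities are elementary facts about the Gårding cone, and I will derive them from two standard properties of $\Gamma_k$. The first is that $\Gamma_k$ is an open convex cone that is invariant under permutations and contains the positive cone. The second is that for $\lambda\in\Gamma_k$ and any index $i$ we have $\lambda|i\in\Gamma_{k-1}$ (as a vector in $\mathbb{R}^{n-1}$), and in particular $\sigma_{k-1}(\lambda|i)>0$. I will use the identity
\[
\sigma_m(\lambda)=\lambda_i\,\sigma_{m-1}(\lambda|i)+\sigma_m(\lambda|i)
\]
throughout.

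\textbf{Inequality \eqref{1i}.} I expand about the pair $\{1,i\}$:
\[
\sigma_{k-1}(\lambda|i)-\sigma_{k-1}(\lambda|1)=(\lambda_1-\lambda_i)\,\sigma_{k-2}(\lambda|1i).
\]
Since $\lambda_1\ge\lambda_i$, it remains to show that $\sigma_{k-2}(\lambda|1i)\ge 0$. This holds because $\lambda|1\in\Gamma_{k-1}$, and deleting a further entry gives $\lambda|1i\in\Gamma_{k-2}$. When $k=2$ we simply have $\sigma_0=1$. Hence $\sigma_{k-1}(\lambda|1)\le\sigma_{k-1}(\lambda|i)$.

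\textbf{Inequality \eqref{eq:garding-one-sided-bound}.} The first step is the standard bound $\sum_{j\ne i}\lambda_j>0$ for every $i$. This follows from $\lambda|i\in\Gamma_{k-1}\subset\Gamma_1$ when $k\ge2$. Ordering gives $\sum_{j\neq i}\lambda_j\le(n-1)\lambda_1$, so this alone only yields $\lambda_i>-(n-1)\lambda_1$, which is too weak. To obtain the sharp constant $\frac{n-k}{k}$, I will use the fact that for $\lambda\in\Gamma_k$ with $\lambda_n$ the smallest entry, $\sigma_k(\lambda)>0$ forces the reverse Maclaurin-type estimate. Concretely, I plan to argue by extremality. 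Fix $\lambda_1$ and the negative entry $\lambda_n=-t\lambda_1$. Replacing every other entry by $\lambda_1$ only enlarges the cone membership, because $\sigma_j$ is increasing in each variable on $\Gamma_k$ (its partial derivatives are $\sigma_{j-1}(\lambda|i)>0$) and the segment stays in the convex cone. So $(\lambda_1,\dots,\lambda_1,-t\lambda_1)\in\overline{\Gamma_k}$ is necessary. For this vector,
\[
\sigma_k=\lambda_1^k\Bigl[\tbinom{n-1}{k}-t\tbinom{n-1}{k-1}\Bigr],
\]
and its positivity gives $t<\binom{n-1}{k}/\binom{n-1}{k-1}=\frac{n-k}{k}$. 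Strictness follows from the openness of $\Gamma_k$ and the strict monotonicity of $\sigma_k$ along the segment. The bound for $i=n$ gives it for every $i$.

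\textbf{Main obstacle.} The delicate point is the monotonicity step: checking that raising $\lambda_2,\dots,\lambda_{n-1}$ up to $\lambda_1$ keeps the vector in $\Gamma_k$ and increases $\sigma_k$. I will handle it using $\Gamma_k+\Gamma_n\subset\Gamma_k$, which follows from convexity and the cone property since $\Gamma_n\subset\Gamma_k$. The increment $(0,\lambda_1-\lambda_2,\dots,\lambda_1-\lambda_{n-1},0)$ lies in $\overline{\Gamma_n}$. Along this path $\partial_i\sigma_k=\sigma_{k-1}(\lambda|i)>0$, so $\sigma_k$ is non-decreasing. If $\lambda_1\le 0$ we would get $\lambda\notin\Gamma_1$, so $\lambda_1>0$ and the statement is meaningful.
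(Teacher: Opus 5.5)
Your proof is correct. It differs from the paper mainly in that the paper gives no argument of its own: it cites Ren--Wang (Lemma~11) for the lower bound on $\lambda_i$ and Wang (Proposition~2.1(1)) for the comparison of $\sigma_{k-1}(\lambda|1)$ with $\sigma_{k-1}(\lambda|i)$.

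Your version is self-contained. It needs only that $\Gamma_k$ is an open convex cone containing $\Gamma_n$ and that $\lambda|i\in\Gamma_{k-1}$.

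\textbf{Second inequality.} The two-index identity $\sigma_{k-1}(\lambda|i)-\sigma_{k-1}(\lambda|1)=(\lambda_1-\lambda_i)\,\sigma_{k-2}(\lambda|1i)$ makes the claim transparent. It also gives the more general monotonicity $\sigma_{k-1}(\lambda|i)\le\sigma_{k-1}(\lambda|j)$ whenever $\lambda_i\ge\lambda_j$.

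\textbf{First inequality.} Comparing with the extremal vector $(\lambda_1,\dots,\lambda_1,\lambda_n)$ has an extra payoff: it shows the constant $\frac{n-k}{k}$ is sharp. For $0\le s<\frac{n-k}{k}$ one checks $\sigma_j(1,\dots,1,-s)=\binom{n-1}{j}-s\binom{n-1}{j-1}>0$ for all $j\le k$, so $(1,\dots,1,-s)\in\Gamma_k$. The citation route buys only brevity.

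\textbf{Cosmetic tidy-up.} The increment $(0,\lambda_1-\lambda_2,\dots,\lambda_1-\lambda_{n-1},0)$ lies only in $\overline{\Gamma_n}$, so the inclusion you need is $\Gamma_k+\overline{\Gamma_n}\subset\Gamma_k$. It holds because the midpoint of an interior point and a closure point of a convex set is interior. Once you have it, the raised vector lies in $\Gamma_k$ itself. Positivity of $\sigma_k$ there then gives the strict bound directly, so the detour through $\overline{\Gamma_k}$ and strict monotonicity along the segment is unnecessary.
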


\begin{proof}
See \cite[Lemma~11]{RenWang2023} and \cite[Proposition 2.1 (1)]{wang-in} for the proof of \eqref{eq:garding-one-sided-bound} and \eqref{1i}, respectively.

\end{proof}

Let $\lambda\in \Gamma_{k}$ and $\lambda_{1}>\lambda_{2}\geq \cdots\lambda_{n}$, set
\begin{eqnarray*}
    F(\lambda)\coloneqq \sigma_{k}(\lambda), \quad F^{ii}\coloneqq \sigma_{k-1}(\lambda|i), \qquad F^{ii,jj}
    \coloneqq
    \begin{cases}
        \sigma_{k-2}(\lambda|ij),& i\neq j,\\
        0,& i=j.
    \end{cases}
\end{eqnarray*}
For any
\(\xi=(\xi_1,\ldots,\xi_n)\in\mathbb R^n\) and $\gamma>0$, define
\(\mathcal Q_\gamma(\lambda;\xi)\) by
\begin{eqnarray}\label{Qgamma}
    \mathcal Q_{\gamma}(\lambda;\xi)
    \coloneqq
    \frac{2}{\lambda_{1} F}
    \left(\sum_{i=1}^nF^{ii}\xi_i\right)^2
    -\gamma\frac{F^{11}}{\lambda_{1}^2}\xi_1^2-\frac{1}{\lambda_{1}}\sum_{i,j=1}^nF^{ii,jj}\xi_i\xi_j
    +\frac{2}{\lambda_{1}}\sum_{p=2}^n
    \frac{F^{pp}}{\lambda_{1}-\lambda_p}\,\xi_p^2.
\end{eqnarray}
The following crucial inequality was proved in \cite{yan2026}. 
\begin{theorem}\label{thm-crucial-ineq}
Let \(n\geq3\), \(2\leq k\leq n-1\). Suppose that
\(\lambda\in\Gamma_k\) and 
\(\lambda_1>\lambda_2\geq\cdots\geq\lambda_n\). Assume that 
$$
0<\gamma<\min\{\frac{2k}{n}, 1+\frac{2k-n}{2k^2+n}\}.
$$
Then there exists a small constant $ \eta_*=
    \eta_*(n,k,\gamma)>0$ such that whenever
\begin{eqnarray}\label{eta}
   0<\frac{F}{\lambda_{1}^k}\leq\eta_*,
\end{eqnarray}
we have
\begin{eqnarray*}
    \mathcal Q_{\gamma}(\lambda;\xi)\ge0,
    \qquad
    \forall\xi\in\mathbb R^n.
\end{eqnarray*}
\end{theorem}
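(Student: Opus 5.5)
Since the statement is a pointwise algebraic inequality, I would argue by homogeneity and compactness. Both sides of $\mathcal Q_\gamma\ge 0$ scale consistently under $\lambda\mapsto t\lambda$ (each term of \eqref{Qgamma} is homogeneous of degree $k-3$ in $\lambda$), so I normalize $\lambda_1=1$. Condition \eqref{eta} then reads $0<\sigma_k(\lambda)\le\eta_*$, and by \eqref{eq:garding-one-sided-bound} we have $|\lambda_i|\le \frac{n-k}{k}$, so $\lambda$ ranges over a bounded set. I would prove the claim by contradiction. Suppose there are $\lambda^{(m)}\in\Gamma_k$ with $\lambda^{(m)}_1=1$, $\sigma_k(\lambda^{(m)})\to0$, and unit vectors $\xi^{(m)}$ with $\mathcal Q_\gamma(\lambda^{(m)};\xi^{(m)})<0$. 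I would pass to limits $\lambda^{(m)}\to\bar\lambda\in\partial\Gamma_k$ and $\xi^{(m)}\to\bar\xi$. The term $\frac{2}{F}(\sum F^{ii}\xi_i)^2$ blows up unless $\sum F^{ii}\xi_i=O(\sqrt F)$, so the violating directions must asymptotically lie in the tangent hyperplane of the level set. Everything therefore reduces to showing that, on this near-tangent subspace, the remaining three terms are uniformly positive, up to an error that the large first term absorbs.

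To organize this, I would first use the concavity of $\sigma_k^{1/k}$ on $\Gamma_k$:
\[
-\sum_{i,j}F^{ii,jj}\xi_i\xi_j+\Bigl(1-\tfrac1k\Bigr)\frac{(\sum_i F^{ii}\xi_i)^2}{F}\ge 0 .
\]
This leaves a surplus $\bigl(1+\tfrac1k\bigr)\frac{(\sum F^{ii}\xi_i)^2}{F}$ from the first term of \eqref{Qgamma}, together with the good term $2\sum_{p\ge2}\frac{F^{pp}}{1-\lambda_p}\xi_p^2$. The only bad term is $-\gamma F^{11}\xi_1^2$. Next I would split $\xi=(\xi_1,\xi')$ and use the identities
\[
F=\lambda_1F^{11}+\sigma_k(\lambda|1),\qquad \lambda_1F^{11,pp}=F^{pp}-\sigma_{k-1}(\lambda|1p).
\]
With these I would rewrite the cross terms $-2\sum_p F^{11,pp}\xi_1\xi_p$ through $F^{pp}$ and $\sigma_{k-1}(\lambda|1p)$. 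The quadratic form then becomes a form in $\xi_1$ whose coefficients involve $F^{11}$, $\sum_p F^{pp}\xi_p$, and a form in $\xi'$. Completing the square in $\xi_1$, the problem becomes the positive semidefiniteness of an explicit quadratic form in $\xi'$. The coefficient of $\xi_1^2$ is the sum of $\frac{2(F^{11})^2}{F}$ and $-\gamma F^{11}$. Since $F^{11}\le F^{pp}$ by \eqref{1i}, and $F^{11}/F\ge$ a positive power of $1/\eta_*$ in the degenerate regime, this coefficient is positive.

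In the degenerate limit $\sigma_k(\bar\lambda)=0$, I would split into cases according to the limiting configuration.

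\textbf{Case (a): $\sigma_{k-1}(\bar\lambda|1)>0$.} Here $\bar\lambda_1$ dominates, and the constraint is essentially $\xi_1\approx -\sum_p F^{pp}\xi_p/F^{11}$. The bad term becomes $-\gamma(\sum_pF^{pp}\xi_p)^2/F^{11}$. By Cauchy--Schwarz this is at most $\gamma\frac{\sum_pF^{pp}}{F^{11}}\sum_p F^{pp}\xi_p^2$. It must be dominated by $2\sum_p\frac{F^{pp}}{1-\lambda_p}\xi_p^2$ plus the contribution of $-\sum_{p\ne q}F^{pp,qq}\xi_p\xi_q$. This is exactly where the numerical restriction on $\gamma$ should enter. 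The bounds $\gamma<\frac{2k}{n}$ and $\gamma<1+\frac{2k-n}{2k^2+n}$ should come from sharp Newton--Maclaurin type estimates for $\sum_pF^{pp}/F^{11}$ and for the sum of the $\lambda_p$ in the limiting configuration, using $\lambda_p\ge-\frac{n-k}{k}$.

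\textbf{Case (b): $\sigma_{k-1}(\bar\lambda|1)=0$.} Then several $\bar\lambda_i$ can be comparable to $\bar\lambda_1$, and the factors $\frac{1}{1-\lambda_p}$ blow up for $\lambda_p$ close to $1$, which helps. I would handle this by a secondary rescaling of $F^{11}$ and the same completion of squares.

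I expect Case (a) to be the main obstacle: it requires an optimal spectral estimate of the reduced $(n-1)$-dimensional form. I would first test the extremal configuration $\lambda=(1,a,\dots,a,b,\dots,b)$ on $\partial\Gamma_k$ to see where the thresholds on $\gamma$ arise. I would then try to prove the general case by reducing to such two-valued configurations via symmetrization in $\xi'$.
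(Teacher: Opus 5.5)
The paper gives no proof of this theorem; it quotes it from \cite{yan2026}. Your proposal therefore has to stand on its own, and it has a genuine gap: the first reduction turns a true inequality into a false one. The homogeneity normalization $\lambda_1=1$ is fine.

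After invoking concavity of $\sigma_k^{1/k}$ you discard all of $-\sum_{i,j}F^{ii,jj}\xi_i\xi_j$ and aim to prove
\[
R(\xi)=\Bigl(1+\tfrac1k\Bigr)\frac{(\sum_iF^{ii}\xi_i)^2}{F}-\gamma F^{11}\xi_1^2+2\sum_{p\ge2}\frac{F^{pp}}{1-\lambda_p}\xi_p^2\ge0 .
\]
This fails for every $\gamma>0$ inside the regime of the theorem. Take $\lambda=(1,\epsilon,\dots,\epsilon)$. Then $F=\binom{n-1}{k-1}\epsilon^{k-1}(1+O(\epsilon))\le\eta_*$ for small $\epsilon$, while $F^{11}=\binom{n-1}{k-1}\epsilon^{k-1}$ and $F^{pp}=\binom{n-2}{k-2}\epsilon^{k-2}(1+O(\epsilon))$. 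Choose $\xi_p=1$ for $p\ge2$ and $\xi_1=-(n-1)F^{pp}/F^{11}$, so that $\sum_iF^{ii}\xi_i=0$. Then
\[
R(\xi)=(n-1)\tbinom{n-2}{k-2}\,\epsilon^{k-3}\bigl(-\gamma(k-1)+O(\epsilon)\bigr)<0 .
\]
The discarded term equals $(n-1)k\binom{n-2}{k-2}\epsilon^{k-3}(1+O(\epsilon))$. So the true value is $\mathcal Q_\gamma=(n-1)\binom{n-2}{k-2}\epsilon^{k-3}\bigl(k-\gamma(k-1)+O(\epsilon)\bigr)>0$, using $\gamma<\frac{k}{k-1}$ under the hypotheses.

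The positive contribution comes from the mixed terms. Since $\lambda_1F^{11,pp}=F^{pp}-\sigma_{k-1}(\lambda|1p)$, on the hyperplane $\sum_iF^{ii}\xi_i=0$ one has $-2\xi_1\sum_pF^{11,pp}\xi_p=2F^{11}\xi_1^2+2\xi_1\sum_p\sigma_{k-1}(\lambda|1p)\xi_p$. That $2F^{11}\xi_1^2$ is what beats $-\gamma F^{11}\xi_1^2$. So concavity throws away exactly the term that controls your only bad term. Your later plan to ``rewrite the cross terms $-2\sum_pF^{11,pp}\xi_1\xi_p$'' is inconsistent with that step, since those terms no longer exist after it. In Case (a) you likewise keep $-\sum_{p\ne q}F^{pp,qq}\xi_p\xi_q$ but drop the mixed terms.

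The rest of the plan does not close the gap either.
\begin{itemize}
\item The example lies in your Case (b), with $\bar\lambda=(1,0,\dots,0)$. There $F^{11}\sim\epsilon^{k-1}$ and $F^{pp}\sim\epsilon^{k-2}$. The decisive terms are of order $\epsilon^{k-3}$, while your good term $2\sum_pF^{pp}\xi_p^2/(1-\lambda_p)$ is only $O(\epsilon^{k-2})$. A compactness argument by contradiction yields nothing when the limiting form is degenerate. ``A secondary rescaling'' does not replace the quantitative, multi-scale positivity analysis that is needed.
\item Your justification for the $\xi_1^2$-coefficient is wrong. In the example $F^{11}/F\to1$; it does not grow like a power of $1/\eta_*$. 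Positivity of $F^{11}(2F^{11}/F-\gamma)$ instead follows from $\lambda_1\sigma_{k-1}(\lambda|1)\ge\frac kn\sigma_k(\lambda)$ (Newton--Maclaurin applied to $\lambda|1$ when $\sigma_k(\lambda|1)>0$) together with $\gamma<\frac{2k}{n}$.
\item The actual content of the theorem is only conjectured. This means the nonnegativity of the reduced form in $\xi'$, including the correct mixed terms, and the origin of the thresholds $\frac{2k}{n}$ and $1+\frac{2k-n}{2k^2+n}$.
\item Symmetrizing in $\xi'$ cannot reduce a general $\lambda$ to two-valued configurations, because the coefficients of the form depend on $\lambda$ itself.
\end{itemize}
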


\subsection{Formulas for vertical graphs in $\HH^{n+1}$}
Let \(\Omega\subset\mathbb R^n\) be a bounded smooth domain. Suppose that \(u\in C^2(\Omega)\) and $u>0$ in $\Omega$. Consider the vertical graph
\begin{equation*}
  \Sigma \coloneqq
\bigl\{(x,u(x)):x\in\Omega\bigr\}.
\end{equation*}
In the coordinate formulas below, \(u_{x_i}\) and \(u_{x_ix_j}\)
denote ordinary derivatives on \(\Omega\).  Set
\begin{equation*}
    w=\sqrt{1+|Du|^2},
    \qquad
    \nu=\frac{(-Du,1)}{w},
    \qquad
    \vartheta=\nu^{n+1}=\frac{1}{w}.
\end{equation*}
The Euclidean and hyperbolic induced metrics
are
\begin{eqnarray*}
    \tilde{g}_{ij}=\delta_{ij}+u_{x_{i}}u_{x_{j}},\quad \quad g_{ij}=u^{-2}\tilde{g}_{ij},
\end{eqnarray*}
and their inverse matrices  are given by
\begin{eqnarray*}
    \tilde{g}^{-1}_{ij}=\delta_{ij}-\frac{u_{x_{i}}u_{x_{j}}}{\omega^{2}},\quad\quad g^{ij}=u^{2}\tilde{g}^{ij}.
\end{eqnarray*}
The second fundamental form of $\Sigma$ in $\mathbb{R}^{n+1}$ and $\mathbb{H}^{n+1}$ are given by 
\begin{eqnarray*}
    \tilde{h}_{ij}=\frac{u_{x_{i}x_{j}}}{\omega},
\end{eqnarray*}
and 
\begin{eqnarray}\label{hij}
    h_{ij}=\frac{1}{u^{2}\omega}\left(\delta_{ij}+u_{x_{i}}u_{x_{j}}+uu_{x_{i}x_{j}}\right)=\frac{\tilde{h}_{ij}}{u}+\vartheta g_{ij}.
\end{eqnarray}
Then the Euclidean principal curvatures $\tilde \kappa(\Sigma)$ are the eigenvalues of the symmetric matrix
\begin{eqnarray*}
    \tilde{h}_{j}^{i}=\tilde{g}^{ik}\tilde{h}_{kj}\coloneqq \frac{1}{\omega}\gamma^{ik}u_{kl}\gamma^{lj},
\end{eqnarray*}
where 
\begin{eqnarray*}
     \gamma^{ij} =\delta_{ij} -\frac{u_{x_i}u_{x_j}}{w(1+w)}.
\end{eqnarray*}
By \eqref{relation} and \eqref{hij}, the hyperbolic principal curvatures $\kappa[\Sigma]$ are the eigenvalues of the matrix 
\begin{eqnarray*}
    h^{i}_{j}=g^{ik}h_{kj}\coloneqq\frac{1}{\omega}\left(\delta_{ij}+u\gamma^{ik}u_{kl}\gamma^{lj}\right).
\end{eqnarray*}
Then the geometric problem
\eqref{k-cur}--\eqref{bry} is reduced to the Dirichlet problem for a
fully nonlinear elliptic equation
\begin{eqnarray}\label{g-equ-1}
    G(D^{2}u, Du, u)\coloneqq H_{k}(\lambda(h_{j}^{i}))=\sigma \quad \text{in}\quad  \Omega,
\end{eqnarray}
and 
\begin{eqnarray*}
    u=0,\quad \text{on}\quad \partial\Omega.
\end{eqnarray*}
Without causing confusion, we denote 
\begin{eqnarray*}
    F\coloneqq \sigma_{k}(h^{i}_{j}),\quad F^{ij}\coloneqq \frac{\partial F}{\partial h_{j}^{i}},\quad F^{ij,kl}\coloneqq\frac{\partial^{2} F}{\partial h^{i}_{j}\partial h_{l}^{k}}
\end{eqnarray*}

We now switch to intrinsic notation.  Let
\(\{\tau_1,\ldots,\tau_n\}\) be a local hyperbolic orthonormal frame,
and set
\(u_i=\nabla_i u\), \(\vartheta_i=\nabla_i\vartheta\), and
\(\vartheta_{ij}=\nabla_{ij}\vartheta\).  The following identities
are the forms needed in the maximum-principle argument.

\begin{lemma}\label{lem:vertical-normal-identities}
For a vertical graph $\Sigma\in\mathbb{H}^{n+1}$,  suppose that $\Sigma$ satisfies \(F=\sigma_k(h^{i}_{j})=c\) for  some positive constant $c$. Assume at some point $X_{0}\in \Sigma$ and choose \(\{\tau_1,\ldots,\tau_n\}\) be a local orthonormal frame around $X_{0}$, such that
  $h_{i}^{j}(X_{0})=\kappa_i\delta_{ij}.$ Then
\begin{enumerate}
    \item 
    $\sum\limits_{i=1}^n\frac{u_i^2}{u^2}
        =1-\vartheta^2$, \\
   \item $ \vartheta_i
        =\frac{u_i}{u}(\vartheta-\kappa_i),
          \qquad 1\leq i\leq n$,\\
   \item $\sum\limits_{i=1}^nF^{ii}\vartheta_{ii}=2\sum\limits_{i=1}^nF^{ii}\frac{u_i^2}{u^2}
          (\vartheta-\kappa_i)+kF(1+\vartheta^2)
          -\vartheta\sum\limits_{i=1}^nF^{ii}(1+\kappa_i^2)$.
\end{enumerate}
\end{lemma}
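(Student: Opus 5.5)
The three identities are pointwise statements about the height function $u=\langle X,E_{n+1}\rangle$ and the vertical component $\vartheta=\nu^{n+1}$. I plan to derive them by passing between the Euclidean and hyperbolic pictures, using $g=u^{-2}\tilde g$, $\textbf{n}=u\nu$ and the relation \eqref{relation}. Throughout I work at $X_0$ with a hyperbolic orthonormal frame $\tau_i=u\tilde e_i$, where $\{\tilde e_i\}$ is a Euclidean orthonormal frame of principal directions, so that $h_{ij}=\kappa_i\delta_{ij}$ and $\tilde h_{ij}=\tilde\kappa_i\delta_{ij}$ in the respective frames.

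\emph{Step 1: identity (1).} The Euclidean tangential gradient of $u$ is $E_{n+1}^{T}=E_{n+1}-\vartheta\nu$, whose squared Euclidean length is $1-\vartheta^2$. Since $u_i=\tau_i(u)=u\langle\tilde e_i,E_{n+1}\rangle$, I get
\[
\sum_i u_i^2=u^2\,|E_{n+1}^T|^2=u^2(1-\vartheta^2),
\]
which is (1).

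\emph{Step 2: identity (2).} Differentiate $\vartheta=\langle\nu,E_{n+1}\rangle$ along $\tau_i$. With the Weingarten equation $D_{\tilde e_i}\nu=-\tilde\kappa_i\tilde e_i$ (this fixes the sign convention for $\tilde h$), this gives
\[
\vartheta_i=-u\tilde\kappa_i\langle\tilde e_i,E_{n+1}\rangle=-\tilde\kappa_i u_i .
\]
By \eqref{relation}, $\tilde\kappa_i=(\kappa_i-\vartheta)/u$, so $\vartheta_i=\frac{u_i}{u}(\vartheta-\kappa_i)$. In a general (non-principal) frame the same computation gives
\[
\vartheta_j=\sum_l\frac{u_l}{u}\bigl(\vartheta\delta_{jl}-h_{jl}\bigr),
\]
and this tensorial form is what I differentiate in the next step.

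\emph{Step 3: identity (3), preliminary Hessian formula.} First I need the hyperbolic Hessian of $u$ on $\Sigma$. The ambient Hessian of $x_{n+1}$ for the conformal metric $\bar g=u^{-2}\delta$ follows from the Christoffel symbols of a conformally flat metric; restricted to the tangent space it has the form $\frac{2}{u}du\otimes du-u\,g$ (the precise constants must be verified). The Gauss formula then adds the normal term $h_{ij}\,\bar g(\bar\nabla u,\textbf{n})=h_{ij}\,u\vartheta$. This yields
\[
u_{ij}=\frac{2u_iu_j}{u}-u\,\delta_{ij}+u\vartheta\,h_{ij},
\]
which I will double-check against the trace identity $\Delta u=\dots$ obtained from (1).

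\emph{Step 3, main computation.} Differentiate the tensorial form of $\vartheta_j$ covariantly in $\tau_j$ and evaluate at $X_0$. Contracting with $F^{ii}$, the term $-\sum_l\frac{u_l}{u}\sum_iF^{ii}h_{ii;l}$ arises after applying Codazzi, and it vanishes because $F$ is constant. The terms containing $\vartheta_i$ are rewritten with (2). The terms containing $u_{il}$ are replaced by the Hessian formula above. I then collect everything using $\sum_iF^{ii}\kappa_i=kF$ and (1), the latter to convert $\sum_l u_l^2/u^2$ into $1-\vartheta^2$ where it appears multiplied by $kF$. After this collection the result should take the form
\[
2\sum_iF^{ii}\frac{u_i^2}{u^2}(\vartheta-\kappa_i)+kF(1+\vartheta^2)-\vartheta\sum_iF^{ii}(1+\kappa_i^2).
\]

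The main obstacle is purely bookkeeping: getting the ambient Hessian of $x_{n+1}$ in $\bar g$ and the signs of $\bar g(\bar\nabla u,\textbf{n})$ exactly right, so that the coefficients $2$, $1+\vartheta^2$ and $1+\kappa_i^2$ come out as stated. As a check, I will compare the result with the corresponding formulas in \cite{GuanSpruckSzapiel2009,GuanSpruck2010}.
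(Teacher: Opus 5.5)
Your proposal is correct, and it takes a different route from the paper. The paper derives nothing here from scratch. It quotes (1) and (2) from Guan--Spruck, and gets (3) by applying their Lemma 4.3, (4.9), to the degree-one function $\widehat F=\sigma_k^{1/k}$ and multiplying through by $kF^{1-1/k}$. You instead compute everything directly, and the constants you left to be checked do come out right:
\begin{itemize}
\item For $\bar g=x_{n+1}^{-2}\delta$ one has $\bar\nabla^2x_{n+1}=\frac{2}{x_{n+1}}dx_{n+1}\otimes dx_{n+1}-x_{n+1}\bar g$.
\item One has $\bar g(\bar\nabla u,\mathbf{n})=u\vartheta$, so $u_{ij}=2u_iu_j/u-u\delta_{ij}+u\vartheta h_{ij}$.
\item Differentiating $\vartheta_j=\sum_l\frac{u_l}{u}(\vartheta\delta_{jl}-h_{jl})$, the term $\vartheta_iu_i/u$ cancels exactly against the terms produced by differentiating $1/u$. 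At $X_0$ this leaves
\[
\vartheta_{ii}=\Bigl(\frac{2u_i^2}{u^2}-1+\vartheta\kappa_i\Bigr)(\vartheta-\kappa_i)-\sum_{l}\frac{u_l}{u}h_{ii;l}.
\]
\item Since $(\vartheta\kappa_i-1)(\vartheta-\kappa_i)=\kappa_i(1+\vartheta^2)-\vartheta(1+\kappa_i^2)$, contracting with $F^{ii}$ and using $\sum_iF^{ii}h_{ii;l}=0$ and $\sum_iF^{ii}\kappa_i=kF$ gives (3).
\end{itemize}
Contrary to what you anticipated, identity (1) is not needed at this stage.

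What your route buys: it is self-contained and makes the sign conventions explicit (upward $\nu$, $h_{ij}=\bar g(\bar\nabla_{e_i}e_j,\mathbf{n})$, consistent with $\kappa_i=u\tilde\kappa_i+\nu^{n+1}$). It also shows that (3) holds for any constant curvature function homogeneous of degree $k$, with no detour through $\sigma_k^{1/k}$. The paper's route is shorter, but it relies on the cited lemma's conventions matching the present ones and on the rescaling step.
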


\begin{proof}
The first identity follows from Guan--Spruck
\cite[(2.9)]{GuanSpruck2011}. The second identity is
\cite[(2.18)]{GuanSpruck2011}; it is also stated explicitly immediately
after (4.11) therein. For the third identity, apply
\cite[Lemma~4.3, (4.9)]{GuanSpruck2011} to the degree-one curvature
function \(\widehat F=(\sigma_k)^{1/k}\), and multiply the resulting
identity by \(kF^{1-1/k}\), where \(F=\sigma_k\). Since
\[
    \widehat F^{ij}
    =\frac{1}{k}F^{1/k-1}F^{ij},
    \qquad
    \vartheta_i=\frac{u_i}{u}(\vartheta-\kappa_i),
\]
this gives $(3)$.
\end{proof}

\section{Proofs of Theorem \ref{thm:hyperbolic-global-curvature} and Theorem \ref{thm:asymptotic-plateau}}

\begin{proof}[\textbf{Proof of Theorem~\ref{thm:hyperbolic-global-curvature}}.]
Consider the auxiliary function
\begin{eqnarray*}
    \mathcal{G}
    :=
    \log\kappa_{\max}
    -\frac{n+k}{n}\log\vartheta,
\end{eqnarray*}
where $\kappa_{\max}\coloneqq\max\limits_{1\leq i\leq n}\{\kappa_{i}\}$.

Suppose that \(\mathcal{G}\) attains its maximum at some point $x_{0}\in\Sigma$. If $x_{0}\in \partial\Sigma$, then combining \eqref{omega}, we conclude that \eqref{eq:hyperbolic-global-curvature} holds. Next, we assume that $x_{0}\in\Sigma \setminus \partial\Sigma$. 
Choose an orthonormal frame at \(x_0\) such that the second
fundamental form $h_{j}^{i}$ is diagonal, therefore, by Proposition \ref{prop2.1}, we have $F^{ij}$ is also diagonal at $x_0$. Denote
\begin{eqnarray*}
    (h_{ij})
    =
    \operatorname{diag}(\kappa_1,\ldots,\kappa_n),
    \qquad
    a:=\kappa_1=\cdots=\kappa_m
    >\kappa_{m+1}\geq\cdots\geq\kappa_n.
\end{eqnarray*}
By standard approximation argument as in \cite[Corollary 2.3]{yan2026}, we may assume $m=1$. If \(a\) is bounded, there is nothing to prove. We may therefore
assume that \(a>1\) is sufficiently large.

At \(x_0\), the maximum value condition yields
\begin{eqnarray}\label{eq:critical-6.2}
    0
    =
    \G_i
    =
    \frac{h_{11;i}}{a}
    -\frac{n+k}{n}\frac{\vartheta_i}{\vartheta}.
\end{eqnarray}
Combining with the second derivative formula for the largest eigenvalue, see  \cite[Lemma 5]{BCD2017}, we obtain
\begin{eqnarray}
    0
    &\geq&
    \sum_{i=1}^nF^{ii}\G_{ii}
    \nonumber\\
    &\geq&
    \frac1a\sum_{i=1}^nF^{ii}h_{11;ii}
    +\frac2a\sum_{i=1}^n\sum_{p=2}^n
    \frac{F^{ii}h_{1p;i}^2}{a-\kappa_p}
    -\frac1{a^2}\sum_{i=1}^nF^{ii}h_{11;i}^2
    \nonumber\\
    &&
    -\frac{n+k}{n\vartheta}
    \sum_{i=1}^nF^{ii}\vartheta_{ii}
    +\frac{n+k}{n\vartheta^2}
    \sum_{i=1}^nF^{ii}\vartheta_i^2.
    \label{eq:hyperbolic-direct-maximum-inequality-0}
\end{eqnarray}
From \eqref{g-equ-1}, we have
\begin{eqnarray}
    F=\sigma_{k}(\lambda(h^{i}_{j}))=\binom{n}{k}\sigma. \label{F-equ}
\end{eqnarray}
Differentiating Eq. \eqref{F-equ}, we obtain
\begin{eqnarray*}
    \sum_{i=1}^nF^{ii}h_{ii;l}=0,
    \qquad 1\leq l\leq n,
\end{eqnarray*}
and 
\begin{eqnarray}\label{2-der}
    \sum_{i=1}^nF^{ii}h_{ii;11}
    +
    \sum_{p,q,r,s=1}^n
    F^{pq,rs}h_{pq;1}h_{rs;1}
    =
    0.
\end{eqnarray}
By \cite[Theorem 5.1]{ben-2007}, we have
\begin{eqnarray*}
    F^{pq,rs}h_{pq;1}h_{rs;1}=\sum\limits_{i,j=1}^{n}F^{ii,jj}h_{ii;1}h_{jj;1}+2\sum\limits_{k<l}\frac{F^{kk}-F^{ll}}{\kappa_{k}-\kappa_{l}}h_{kl;1}^{2}.
\end{eqnarray*}
Together with \eqref{2-der} and \eqref{1i}, we derive that
\begin{eqnarray*}
\sum_{i=1}^nF^{ii}h_{ii;11}\geq-\sum_{i,j}F^{ii,jj}h_{ii;1}h_{jj;1}+2\sum_{p=2}^n\frac{F^{pp}-F^{11}}{a-\kappa_p}h_{11;p}^2    
\end{eqnarray*}

Recall the commutator formula \eqref{com},
\begin{eqnarray}\label{11}
    h_{11;ii}
    =
    h_{ii;11}
    +(a\kappa_i-1)(a-\kappa_i).
\end{eqnarray}
By Lemma \ref{lem:vertical-normal-identities}, we have
\begin{eqnarray}\label{12}
    \vartheta_i
    =
    \frac{u_i}{u}(\vartheta-\kappa_i)
\end{eqnarray}
and 
\begin{eqnarray}\label{13}
    \sum_{i=1}^nF^{ii}\vartheta_{ii}
    =2\sum_{i=1}^nF^{ii}\frac{u_i^2}{u^2}(\vartheta-\kappa_i)+kF(1+\vartheta^2)-\vartheta\sum_{i=1}^n F^{ii}(1+\kappa_i^2).
\end{eqnarray}

Substituting  \eqref{11}, \eqref{12} and \eqref{13} into
\eqref{eq:hyperbolic-direct-maximum-inequality-0} and collecting the
third-order derivative terms, we obtain
\begin{eqnarray}
    0&\geq&\frac{1}{a}\sum_{i=1}^n F^{ii}(a\kappa_i-1)(a-\kappa_i)+\mathcal Q_{\gamma}+\frac{1}{a^2}\sum_{p=2}^n\frac{a+\kappa_p}{a-\kappa_p}F^{pp}h_{11;p}^2+(\gamma-1)\frac{1}{a^2}F^{11}h_{11;1}^2\nonumber\\
    &&-\frac{k(n+k)}{n}F(\vartheta+\vartheta^{-1})+\frac{n+k}{n}\sum_{i=1}^n F^{ii}(1+\kappa_i^2)+\frac{n+k}{n}\sum_{i=1}^nF^{ii}\frac{u_i^2}{u^2}\left(\frac{\kappa_i^2}{\vartheta^2}-1\right), \label{key-ineu}
\end{eqnarray}
where \(\mathcal Q_\gamma\) is the quadratic form defined
in \eqref{Qgamma}, with
\(\xi_i:=h_{ii;1}\).

Choose 
$$
0<\gamma=\frac{k}{n}<\min\{\frac{2k}{n}, 1+\frac{2k-n}{2k^2+n}\},
$$
We can assume that $a=\lambda_{1}$ is sufficiently large, otherwise, we complete the proof. From \eqref{F-equ}, condition \eqref{eta} is satisfies as $a$ is big enough, and then Theorem \ref{thm-crucial-ineq} yields \begin{eqnarray}\label{g0}
    \mathcal Q_{\gamma}\geq 0.
\end{eqnarray}
On the other hand, by \eqref{eq:critical-6.2}, we have 
\begin{eqnarray}\label{3-1}
   && \frac{1}{a^2}\sum_{p=2}^n\frac{a+\kappa_p}{a-\kappa_p}F^{pp}h_{11;p}^2+(\gamma-1)\frac{1}{a^2}F^{11}h_{11;1}^2 \notag \\
   &=&\frac{(n+k)^{2}}{n^{2}}\left[\sum\limits_{p=2}^{n}\frac{a+\kappa_{p}}{a-\kappa_{p}}F^{pp}\frac{\vartheta_{p}^{2}}{\vartheta^{2}}+(\gamma-1)F^{11} \frac{\vartheta_{1}^{2}}{\vartheta^{2}}\right].
\end{eqnarray}
Inserting \eqref{3-1} and \eqref{g0} into \eqref{key-ineu}, we obtain
\begin{eqnarray*}
0&\geq&kF\left[a+a^{-1}-\frac{n+k}{n}(\vartheta+\vartheta^{-1})\right]+\frac{k}{n}\sum_{i=1}^n F^{ii}(1+\kappa_i^2)\nonumber\\
&&+\frac{n+k}{n}F^{11}\frac{u_1^2}{u^2}\left(1-\frac{a}{\vartheta}\right)\left[\frac{k^2}{n^2}(1-\frac{a}{\vartheta})-2\right]\nonumber\\
&&+\frac{n+k}{n}\sum_{i=2}^n F^{ii}\frac{u_i^2}{u^2}\left(1-\frac{\kappa_i}{\vartheta}\right)\times\left[\left(1+\frac{n+k}{n}\frac{a+\kappa_i}{a-\kappa_i}\right)\left(1-\frac{\kappa_i}{\vartheta}\right)-2\right].\label{eq:hyperbolic-direct-maximum-inequality}
\end{eqnarray*}
Since \(0<\vartheta\leq1<a\), then 
\begin{eqnarray*}
    \frac{n+k}{n}F^{11}\frac{u_1^2}{u^2}\left(1-\frac{a}{\vartheta}\right)\left[\frac{k^2}{n^2}(1-\frac{a}{\vartheta})-2\right]> 0,
\end{eqnarray*}
and 
\begin{eqnarray}
0&\geq&kF\left[a+a^{-1}-\frac{n+k}{n}(\vartheta+\vartheta^{-1})\right]+\frac{k}{n}\sum_{i=1}^n F^{ii}(1+\kappa_i^2)\nonumber\\
&&+\frac{n+k}{n}\sum_{i=2}^n F^{ii}\frac{u_i^2}{u^2}\left(1-\frac{\kappa_i}{\vartheta}\right)\left[\left(1+\frac{n+k}{n}\frac{a+\kappa_i}{a-\kappa_i}\right)\left(1-\frac{\kappa_i}{\vartheta}\right)-2\right].\label{3.11}
\end{eqnarray}

Denote 
\begin{eqnarray*}
    \mathcal{I}_{i}\coloneqq\frac{n+k}{n} F^{ii}\frac{u_i^2}{u^2}\left(1-\frac{\kappa_i}{\vartheta}\right)\left[\left(1+\frac{n+k}{n}\frac{a+\kappa_i}{a-\kappa_i}\right)\left(1-\frac{\kappa_i}{\vartheta}\right)-2\right],\quad 2\leq i\leq n.
\end{eqnarray*}
In order to deal with the term $\{\mathcal{I}\}_{i=2}^{n}$, we divide the indices into three cases. 

\medskip
\noindent
\textbf{Case 1:} $\{i: \kappa_i\geq\vartheta\}$.
In this case,
\begin{eqnarray*}
    1-\frac{\kappa_i}{\vartheta}\leq0,\qquad 1+
    \frac{n+k}{n}
    \frac{a+\kappa_i}{a-\kappa_i}
    >0.
\end{eqnarray*}
It follows that $\mathcal{I}_{i}\geq 0$.

\medskip
\noindent
\textbf{Case 2 :} $\{i: \kappa_i<0\}$.
By \eqref{eq:garding-one-sided-bound}, we have
\begin{eqnarray*}
    \kappa_i>-\frac{n-k}{k}a
\end{eqnarray*}
which implies
\begin{eqnarray}\label{k-1}
    1+
    \frac{n+k}{n}
    \frac{a+\kappa_i}{a-\kappa_i}
    >
    \frac{k(n+2k)}{n^2}
    >0.
\end{eqnarray}
Since \(0<\vartheta\leq1<a\), we have
\begin{eqnarray*}
    1-\frac{\kappa_i}{\vartheta}
    \geq
    1-\kappa_i.
\end{eqnarray*}
A direct calculation yields
\begin{eqnarray}\label{k-2}
    \left(
        1+
        \frac{n+k}{n}
        \frac{a+\kappa_i}{a-\kappa_i}
    \right)
    \left(
        1-\frac{\kappa_i}{a}
    \right)-2
    =
    \frac{k}{n}
    \left(
        1+\frac{\kappa_i}{a}
    \right).
\end{eqnarray}
and 
\begin{eqnarray}\label{k-3}
    1-\frac{\kappa_i}{\vartheta}
    \geq
    1-\kappa_i
    =
    1-\frac{\kappa_i}{a}
    -\kappa_i
    \left(
        1-\frac1a
    \right).
\end{eqnarray}
When $a\geq1+\frac{n}{n+2k}$, combining \eqref{k-1}, \eqref{k-2} and \eqref{k-3}, we obtain
\begin{eqnarray*}
    &&
    \left(
        1+
        \frac{n+k}{n}
        \frac{a+\kappa_i}{a-\kappa_i}
    \right)
    \left(
        1-\frac{\kappa_i}{\vartheta}
    \right)-2
    \\
    &\geq&
    \frac{k}{n}
    \left(
        1+\frac{\kappa_i}{a}
    \right)
    -
    \frac{k(n+2k)}{n^2}
    \kappa_i
    \left(
        1-\frac1a
    \right)
    \\
    &=&
    \frac{k}{n}
    -
    \frac{\kappa_i}{a}
    \left[
        \frac{k(n+2k)}{n^2}(a-1)
        -\frac{k}{n}
    \right]
    \geq
    \frac{k}{n}>0
\end{eqnarray*}
Thus, we  derive that $\mathcal{I}_{i}\geq 0$.

\medskip
\noindent
\textbf{Case 3:} $\{i: 0\leq\kappa_i<\vartheta.\}$
Here
\begin{eqnarray*}
    0<
    1-\frac{\kappa_i}{\vartheta}
    \leq1,
    \qquad
    \frac{a+\kappa_i}{a-\kappa_i}\geq1.
\end{eqnarray*}
Direct calculations yield 
\begin{eqnarray*}
    &&
    \frac{n+k}{n}
    \left(
        1-\frac{\kappa_i}{\vartheta}
    \right)
    \left[
        \left(
            1+
            \frac{n+k}{n}
            \frac{a+\kappa_i}{a-\kappa_i}
        \right)
        \left(
            1-\frac{\kappa_i}{\vartheta}
        \right)-2
    \right]
    \\
    &\geq&
    \frac{n+k}{n}
    \left(
        1-\frac{\kappa_i}{\vartheta}
    \right)
    \left[
        \frac{2n+k}{n}
        \left(
            1-\frac{\kappa_i}{\vartheta}
        \right)-2
    \right]
    \\
    &=&
    \frac{(n+k)(2n+k)}{n^2}
    \left(
        1-\frac{\kappa_i}{\vartheta}
        -\frac{n}{2n+k}
    \right)^2
    -\frac{n+k}{2n+k}
    \\
    &\geq&
    -\frac{n+k}{2n+k}.
\end{eqnarray*}

Since \(\kappa_i\geq0\), we have
\begin{eqnarray*}
    F^{ii}
    \leq
    \sigma_{k-1}(\kappa)
    =
    \frac{1}{n-k+1}
    \sum_{j=1}^nF^{jj},
\end{eqnarray*}
and 
\begin{eqnarray*}
    \sum_{i=1}^n\frac{u_i^2}{u^2}
    =
    1-\vartheta^2
    \leq1.
\end{eqnarray*}
We conclude that
\begin{eqnarray*}
    \mathcal{I}_{i}\geq   -\frac{n+k}{(2n+k)(n-k+1)}
    \sum_{j=1}^nF^{jj}.
\end{eqnarray*}
By taking  account into Case 1, Case 2, and Case 3, we derive that
\begin{eqnarray*}
    \sum\limits_{i=2}^{n}\mathcal{I}_{i}\geq  -\frac{n+k}{(2n+k)(n-k+1)}
    \sum_{j=1}^nF^{jj}.
\end{eqnarray*}
Combining this estimate with
\eqref{3.11}, we obtain
\begin{eqnarray*}
    0
    &\geq&
    kF\left[
        a+\frac1a
        -\frac{n+k}{n}
        \left(
            \vartheta+\frac1\vartheta
        \right)
    \right]
    +\frac{k}{n}
    \sum_{i=1}^nF^{ii}\kappa_i^2
    +
    \left[
        \frac{k}{n}
        -
        \frac{n+k}{(2n+k)(n-k+1)}
    \right]
    \sum_{i=1}^nF^{ii}\\
    &\geq &kF\left[
        a+\frac1a
        -\frac{n+k}{n}
        \left(
            \vartheta+\frac1\vartheta
        \right)
    \right].
\end{eqnarray*}

Since \(\sigma\leq\vartheta\leq1\), we have
\begin{eqnarray*}
    \vartheta+\frac 1\vartheta
    \leq
    1+\frac1\sigma.
\end{eqnarray*}
Hence
\begin{eqnarray*}
    a\leq C,
\end{eqnarray*}
where the positive constant $C$ depends on $n,k,\sigma$. 
We complete the proof of  \eqref{eq:hyperbolic-global-curvature}.
\end{proof}

Finall, we complete the proof of Theorem \ref{thm:asymptotic-plateau}.
\begin{proof}[\textbf{Proof of Theorem~\ref{thm:asymptotic-plateau}.}]
As pointed out in \cite{GuanSpruck2010}, it only remains to establish global-to-boundary curvature estimates. In view of Theorem \eqref{thm:hyperbolic-global-curvature}, we apply the approximation and compactness argument developed in \cite{GuanSpruck2010} to establish Theorem \ref{thm:asymptotic-plateau}.
\end{proof}

\bigskip

\bigskip

\noindent\textit{Acknowledgment:} X.M. was supported by the National Key R $\&$ D Program of China (No. 2020YFA0712800) and the Postdoctoral Fellowship Program of CPSF under Grant Number 2025T180843 and 2025M773082.

\printbibliography

\end{document}